\title{Internal lenses as functors and cofunctors}
\author{Bryce Clarke\thanks{The author is supported by the 
Australian Government Research Training Program Scholarship.}
\institute{Centre of Australian Category Theory\\ 
Macquarie University, Australia}
\email{bryce.clarke1@hdr.mq.edu.au}
}
\newtheorem{theorem}{Theorem}
\newtheorem{prop}[theorem]{Proposition}
\newtheorem{cor}[theorem]{Corollary}
\theoremstyle{definition}
\newtheorem{defn}[theorem]{Definition}
\newtheorem{example}[theorem]{Example}
\theoremstyle{remark}
\newtheorem*{rem}{Remark}
\newtheorem*{notation}{Notation}
\newcommand{\E}{\mathcal{E}}
\newcommand{\Arr}{\Phi}
\newcommand{\br}{\rightleftharpoons}
\newcommand{\phibar}{\overline{\varphi}}
\newcommand{\comma}[2]{{#1}\downarrow{#2}}
\newcommand{\Set}{\mathbf{Set}}
\newcommand{\Cat}{\mathbf{Cat}}
\newcommand{\Dbl}{\mathbf{Dbl}}
\newcommand{\Kat}{\mathsf{Cat}}
\newcommand{\Cof}{\mathsf{Cof}}
\newcommand{\Lens}{\mathsf{Lens}}
\newcommand{\DOpf}{\mathsf{DOpf}}
\begin{document}
\maketitle

\begin{abstract}
Lenses may be characterised as objects in the category of algebras over
a monad, however they are often understood instead as morphisms, which 
propagate updates between systems. 
Working internally to a category with pullbacks, we define lenses as 
simultaneously functors and cofunctors between categories.
We show that lenses may be canonically represented as a particular 
commuting triangle of functors, and unify the classical state-based 
lenses with both c-lenses and d-lenses in this framework. 
This new treatment of lenses leads to considerable simplifications that 
are important in applications, including a clear interpretation of lens
composition.
\end{abstract}

%%%%%%%%%%%%%%%%%%%%%%%%%%%%%%%%%%%%%%%%%%%%%%%%%%%%%%%%%%%%%%%%%%%%%%%%
\section{Introduction}\label{S:introduction}
%%%%%%%%%%%%%%%%%%%%%%%%%%%%%%%%%%%%%%%%%%%%%%%%%%%%%%%%%%%%%%%%%%%%%%%%

Lenses form a mathematical structure that aims to capture the 
fundamental aspects of certain synchronisations between pairs of 
systems.
The central goal of such synchronisation is to coherently propagate 
updates in one system to updates in another, and vice versa. 
The precise nature of the synchronisation process depends closely on the 
type of system being studied, and thus many different kinds of lenses
have been defined to characterise various applications and examples.

Although a relatively recent subject for detailed abstract study, lenses 
are an impressive example of applied category theory, playing major 
roles in database view updating, in Haskell programs of many kinds, and
in diverse examples of Systems Interoperations, Data Sharing, and 
Model-Driven Engineering.  
Thus, further clarifying the category-theoretic status and systematising
the use of lenses, as this paper aims to do, is an important part of 
applied category theory.

Lenses were originally introduced \cite{FGMPS07} to provide a solution 
to the view-update problem \cite{BS81}.  
In treatments of the view-update problem systems are generally modelled
as a set of states, where it is possible to update from one state of the
system to any other, and the only information retained about this update
are its initial and final states. 
Thus a system may be understood as a \emph{codiscrete category} on its 
set of states $A$ with set of updates $A \times A$ given by a pair of
initial and final states.

Lenses have long been recognised to be some kind of morphism between 
systems. 
An obvious notion of morphism between systems is simply a function 
$f \colon A \rightarrow B$ between their sets of states. 
Since systems may be modelled as codiscrete categories, there is also an
induced function $f \times f \colon A \times A \rightarrow B \times B$
between the sets of updates of these systems. 
The map $f \colon A \rightarrow B$ is called the \textsf{Get} function
and provides the first component of a lens between the systems $A$ and
$B$, often called the \emph{source} and \emph{view}.

The second component of a lens is called the \textsf{Put} function 
$p \colon A \times B \rightarrow A$ whose role is less obvious. 
The set $A \times B$ may be interpreted as the set of 
\emph{anchored view updates} via the induced function 
$f \times 1_{B} \colon A \times B \rightarrow B \times B$
which produces a view update whose initial state is given by the 
\textsf{Get} function. 
The induced function
$\langle \pi_{0}, p \rangle \colon A \times B \rightarrow A \times A$ 
may be regarded as the \textsf{Put} function, propagating every anchored
view update to a source update, illustrated in the diagram below. 
\begin{equation*}
\begin{tikzcd}
A 
\arrow[d, "f"']
&[-1.5em]
a 
\arrow[d, phantom, "\vdots"]
\arrow[r, dashed]
& 
p(a, b) 
\arrow[d, phantom, "\vdots"]
\\
B
& fa 
\arrow[r]
& b 
\end{tikzcd}
\end{equation*}
 
Frequently the \textsf{Get} and \textsf{Put} functions of a lens are 
required to satisfy three additional axioms, called the 
\emph{lens laws}, which ensure the synchronisation of updates between 
systems is well-behaved. 
\begin{equation*}
\begin{tikzcd}
A \times B 
\arrow[r, "p"]
\arrow[rd, "\pi_{1}"']
& A 
\arrow[d, "f"]
\\
& B
\end{tikzcd}
\qquad \qquad
\begin{tikzcd}
A
\arrow[r, "{\langle 1_{A}, f \rangle}"]
\arrow[rd, "1_{A}"']
& A \times B  
\arrow[d, "p"]
\\
& A
\end{tikzcd}
\qquad \qquad
\begin{tikzcd}
A \times B \times B
\arrow[d, "\pi_{0,2}"']
\arrow[r, "p \times 1_{B}"]
& A \times B 
\arrow[d, "p"]
\\
A \times B
\arrow[r, "p"']
& A
\end{tikzcd}
\end{equation*}
In order from left to right: the \textsf{Put-Get} law ensures that the
systems $A$ and $B$ are indeed synchronised under the \textsf{Get} and
\textsf{Put} functions; the \textsf{Get-Put} law ensures that anchored 
view updates which are identities are preserved by the \textsf{Put} 
function; the \textsf{Put-Put} law ensures that composite anchored view 
updates are preserved under the \textsf{Put} function. 

In summary, a \emph{state-based lens} \cite{FGMPS07}, denoted
$(f, p) \colon A \br B$, consists of a \textsf{Get} function 
$f \colon A \rightarrow B$ and a \textsf{Put} function 
$p \colon A \times B \rightarrow A$ satisfying the lens laws. 
Early mathematical work \cite{JRW10} characterised state-based lenses as
algebras for a well-known monad,
\begin{align*}
\faktor{\Set}{B} \quad &\longrightarrow \quad \faktor{\Set}{B} \\
f \colon A \rightarrow B \quad &\longmapsto \quad 
\pi_{1} \colon A \times B \rightarrow B
\end{align*}
which may be generalised to any category with finite products. 
It was later shown that lenses are also coalgebras for a comonad 
\cite{GJ12} and may be defined inside any cartesian closed category. 
While these works took the first steps towards internalisation of 
lenses, they characterised lenses as objects in the category of
Eilenberg-Moore (co)algebras, rather than morphisms between sets, and 
did not account for composition of lenses. 

A significant shortcoming of state-based lenses in many applications is
they only describe synchronisation between systems as a set of states, 
or codiscrete categories, ignoring the information on how states are 
updated.
This motivated the independent development of both \emph{c-lenses} 
\cite{JRW12} and \emph{d-lenses} \cite{DXC11} between systems modelled 
as arbitrary categories. 
Making use of comma categories instead of products, c-lenses were 
defined as algebras for a classical KZ-monad \cite{Str74}, and may be 
also understood as split Grothendieck opfibrations. 
In contrast d-lenses were shown \cite{JR13} to be more general, as split
opfibrations \emph{without} the usual universal property, and could only
be characterised as algebras for a \emph{semi-monad} satisfying an 
additional axiom. 

Later work \cite{JR16} showed that the category of state-based lenses 
(as morphisms) is a full subcategory of the category of d-lenses 
(which also contains a subcategory of c-lenses).
Despite this unification of category-based lenses, composition was still
defined in an ad hoc fashion, and there was no mathematical explanation 
as to why lenses characterised as algebras should be understood as 
morphisms. 

\subsection*{Summary of Paper}

The contribution of this paper may be summarised as follows: 
\begin{itemize}
\item Generalise the theory of lenses to be internal to any category 
$\E$ with pullbacks.
\item Define an \emph{internal lens} as an internal functor and an 
internal cofunctor, which provide the appropriate notion of \textsf{Get}
and \textsf{Put}, respectively. 
\item Characterise internal lenses as diagrams of internal functors,
using the span representation of an internal cofunctor. 
\item Show there is a well-defined category $\Lens(\E)$ whose objects 
are internal categories and whose morphisms are internal lenses. 
\item Demonstrate state-based lenses, c-lenses, and d-lenses as examples 
of internal lenses. 
\end{itemize}

%%%%%%%%%%%%%%%%%%%%%%%%%%%%%%%%%%%%%%%%%%%%%%%%%%%%%%%%%%%%%%%%%%%%%%%%
\section{Background}\label{S:background}
%%%%%%%%%%%%%%%%%%%%%%%%%%%%%%%%%%%%%%%%%%%%%%%%%%%%%%%%%%%%%%%%%%%%%%%%

This section provides a brief review of the relevant internal category
theory required for the paper, most of which can be found in standard 
references such as \cite{Bor94, Joh02, Mac98}. 
Throughout we work internal to a category $\E$ with pullbacks, with the
main examples being $\E = \Set, \Cat$. 

The idea is that a system may be defined as an internal category with an 
\emph{object of states} and an \emph{object of updates}. 
An internal functor will later be interpreted as the \textsf{Get} 
component of an internal lens, while internal discrete opfibrations will
also be central in defining the \textsf{Put} component of an internal 
lens.
Codiscrete categories and arrow categories are presented as examples and 
will later be used to define internal versions of state-based lenses and
c-lenses. 

\begin{defn}\label{defn:internalcat}
An \emph{internal category} $A$ consists of an \emph{object of objects}
$A_{0}$ and an \emph{object of morphisms} $A_{1}$ together with a span,
\begin{equation}
\begin{tikzcd}[row sep = small, column sep = small]
& A_{1}
\arrow[ld, "d_{1}"']
\arrow[rd, "d_{0}"]
& \\
A_{0}
& & A_{0}
\end{tikzcd}
\end{equation}
where $d_{1} \colon A_{1} \rightarrow A_{0}$ is the \emph{domain map} 
and $d_{0} \colon A_{1} \rightarrow A_{0}$ is the \emph{codomain map},
and the pullbacks,
\begin{equation}\label{dgrm:internalpb}
\begin{tikzcd}[row sep = small, column sep = small]
& A_{2}
\arrow[ld, "d_{2}"']
\arrow[rd, "d_{0}"]
& \\
A_{1}
& & A_{1} \\
& A_{0}
\arrow[from=ru, "d_{1}"]
\arrow[from=lu, "d_{0}"']
\arrow[from=uu, phantom, "\lrcorner" rotate = -45, very near start]
&
\end{tikzcd}
\qquad \qquad
\begin{tikzcd}[row sep = small, column sep = small]
& A_{3}
\arrow[ld, "d_{3}"']
\arrow[rd, "d_{0}"]
& \\
A_{2}
& & A_{2} \\
& A_{1}
\arrow[from=ru, "d_{2}"]
\arrow[from=lu, "d_{0}"']
\arrow[from=uu, phantom, "\lrcorner" rotate = -45, very near start]
&
\end{tikzcd}
\end{equation}
where $A_{2}$ is the \emph{object of composable pairs} and $A_{3}$ is 
the \emph{object of composable triples}, as well as an 
\emph{identity map} $i_{0} \colon A_{0} \rightarrow A_{1}$ and 
\emph{composition map} $d_{1} \colon A_{2} \rightarrow A_{1}$ satisfying
the following commutative diagrams: 
\begin{equation}\label{dgrm:internalcat}
\begin{tikzcd}
A_{0}
\arrow[d, "i_{0}"']
\arrow[r, "i_{0}"]
\arrow[rd, "1_{A_{0}}" description]
& A_{1}
\arrow[d, "d_{1}"]
\\
A_{1}
\arrow[r, "d_{0}"']
& A_{0}
\end{tikzcd}
\quad \quad
\begin{tikzcd}
A_{1}
\arrow[d, "d_{1}"']
& A_{2}
\arrow[l, "d_{2}"']
\arrow[r, "d_{0}"]
\arrow[d, "d_{1}"]
& A_{1}
\arrow[d, "d_{0}"]
\\
A_{0}
& A_{1}
\arrow[l, "d_{1}"]
\arrow[r, "d_{0}"']
& A_{0}
\end{tikzcd}
\quad \quad
\begin{tikzcd}
A_{1}
\arrow[d, "i_{1}"']
\arrow[r, "i_{0}"]
\arrow[rd, "1_{A_{1}}" description]
& A_{2}
\arrow[d, "d_{1}"]
\\
A_{2}
\arrow[r, "d_{1}"']
& A_{1}
\end{tikzcd}
\quad \quad
\begin{tikzcd}
A_{3}
\arrow[d, "d_{2}"']
\arrow[r, "d_{1}"]
& A_{2}
\arrow[d, "d_{1}"]
\\
A_{2}
\arrow[r, "d_{1}"']
& A_{1}
\end{tikzcd}
\end{equation}
% Added new line here to replace notation.
The morphisms $i_{0}, i_{1} \colon A_{1} \rightarrow A_{2}$ and 
$d_{1}, d_{2} \colon A_{3} \rightarrow A_{2}$ appearing in 
\eqref{dgrm:internalcat} are defined using the universal property of the
pullback $A_{2}$.
\end{defn}

\begin{example}
A \emph{small category} is an internal category in $\Set$. 
Thus a small category consists of a \emph{set} of objects and a
\emph{set} of morphisms, together with \emph{functions} specifying the
domain, codomain, identity, and composition.
\end{example}

\begin{example}
A (small) \emph{double category} is an internal category in $\Cat$, 
the category of small categories and functors. 
Thus a double category consists of a \emph{category} of objects and 
a \emph{category} of morphisms, together with \emph{functors} specifying
the domain, codomain, identity, and composition.
\end{example}

\begin{example}
Assume $\E$ has finite limits. 
A \emph{codiscrete category} on an object $A \in \E$ is an internal 
category whose object of objects is $A$ and whose object of morphisms is
the product $A \times A$, with domain and codomain maps given by the 
left and right projections:
\begin{equation*}
\begin{tikzcd}[row sep = small, column sep = tiny]
& A \times A
\arrow[ld, "\pi_{0}"']
\arrow[rd, "\pi_{1}"]
& \\
A
& & A
\end{tikzcd}
\end{equation*}
The identity map is given by the diagonal 
$\langle 1_{A}, 1_{A} \rangle \colon A \rightarrow A \times A$,
the object of composable pairs is given by the product 
$A \times A \times A$, and the composition map is given by the following
universal morphism: 
\begin{equation*}
\begin{tikzcd}
& A \times A \times A
\arrow[ld, "\pi_{0}"', start anchor = south west]
\arrow[rd, "\pi_{2}", start anchor = south east]
\arrow[d, "\pi_{0,2}", dashed]
&
\\
A
& A \times A
\arrow[l, "\pi_{0}"]
\arrow[r, "\pi_{1}"']
& A
\end{tikzcd}
\end{equation*}
\end{example}

\begin{example}\label{ex:arrowcat}
Let $A$ be an internal category. 
The \emph{arrow category} $\Arr{A}$ has an object of objects 
$A_{1}$ and an object of morphisms 
$A_{11} \coloneqq A_{2} \times_{A_{1}} A_{2}$ defined by the pullback,
\begin{equation*}
\begin{tikzcd}[column sep = small, row sep = small]
& & A_{11}
\arrow[ld, "\pi_{0}"']
\arrow[rd, "\pi_{1}"]
& & \\
& A_{2}
\arrow[ld, "d_{2}"']
\arrow[rd, "d_{1}"]
& & A_{2}
\arrow[ld, "d_{1}"']
\arrow[rd, "d_{0}"]
& \\
A_{1}
& & A_{1}
\arrow[from=uu, phantom, "\lrcorner" rotate = -45, very near start]
& & A_{1}
\end{tikzcd}
\end{equation*}
with domain map $d_{2}\pi_{0} \colon A_{11} \rightarrow A_{1}$ and 
codomain map $d_{0}\pi_{1} \colon A_{11} \rightarrow A_{1}$. 
The pullback $A_{11}$ may be understood as the 
\emph{object of commutative squares} in $A$. 
The identity and composition maps require tedious notation to define
precisely, however we note they are induced from the diagrams
\eqref{dgrm:internalcat}. 
\end{example}

\begin{defn}\label{defn:internalfunctor}
Let $A$ and $B$ be internal categories. 
An \emph{internal functor} $f \colon A \rightarrow B$ consists of morphisms,
\[
	f_{0} \colon A_{0} \longrightarrow B_{0} 
	\qquad \qquad
	f_{1} \colon A_{1} \longrightarrow B_{1}
\]
satisfying the following commutative diagrams:
\begin{equation}\label{dgrm:internalfunctor}
\begin{tikzcd}
A_{0}
\arrow[d, "f_{0}"']
& A_{1}
\arrow[l, "d_{1}"']
\arrow[r, "d_{0}"]
\arrow[d, "f_{1}"]
& A_{0}
\arrow[d, "f_{0}"]
\\
B_{0}
& B_{1}
\arrow[l, "d_{1}"]
\arrow[r, "d_{0}"']
& B_{0}
\end{tikzcd}
\qquad
\begin{tikzcd}
A_{0}
\arrow[r, "i_{0}"]
\arrow[d, "f_{0}"']
& A_{1}
\arrow[d, "f_{1}"]
\\
B_{0}
\arrow[r, "i_{0}"']
& B_{1}
\end{tikzcd}
\qquad
\begin{tikzcd}
A_{2}
\arrow[r, "d_{1}"]
\arrow[d, "f_{2}"']
& A_{1}
\arrow[d, "f_{1}"]
\\
B_{2}
\arrow[r, "d_{1}"']
& B_{1}
\end{tikzcd}
\end{equation}
% Added new line here to replace notation.
The morphism $f_{2} \colon A_{2} \rightarrow B_{2}$ appearing in 
\eqref{dgrm:internalfunctor} is defined using the universal property of 
the pullback $B_{2}$. 
\end{defn}

\begin{rem}
Given an internal category $A$, the \emph{identity functor} consists of 
a pair of morphisms:
\[
	1_{A_{0}} \colon A_{0} \longrightarrow A_{0}
	\qquad \qquad
	1_{A_{1}} \colon A_{1} \longrightarrow A_{1}
\]
Given internal functors $f \colon A \rightarrow B$ and 
$g \colon B \rightarrow C$, their \emph{composite functor} 
$g \circ f \colon A \rightarrow C$ consists of a pair of morphisms:
\[
	g_{0}f_{0} \colon A_{0} \longrightarrow C_{0}
	\qquad \qquad
	g_{1}f_{1} \colon A_{1} \longrightarrow C_{1}
\]
Composition of internal functors is both unital and associative, 
as it is induced by composition of morphisms in $\E$.
\end{rem}

\begin{defn}
Let $\Kat(\E)$ be the category whose objects are internal categories and 
whose morphisms are internal functors. 
\end{defn}

\begin{example}
The category of sets and functions $\Set$ has pullbacks, thus we obtain
the familiar example $\Cat = \Kat(\Set)$ of small categories and 
functors between them. 
\end{example}

\begin{example}
The category $\Cat$ has pullbacks, so we obtain the category 
$\Dbl = \Kat(\Cat)$ of double categories and double functors between 
them. 
\end{example}

\begin{rem}
The category $\Kat(\E)$ has all pullbacks. Given internal functors 
$f \colon A \rightarrow B$ and $g \colon C \rightarrow B$, their 
pullback is the category $A \times_{B} C$ constructed from the 
pullbacks,
\begin{equation*}
\begin{tikzcd}[row sep = small, column sep = tiny]
& A_{0} \times_{B_{0}} C_{0}
\arrow[ld]
\arrow[rd]
& \\
A_{0}
& & C_{0} \\
& B_{0}
\arrow[from=lu, "f_{0}"']
\arrow[from=ru, "g_{0}"]
\arrow[from=uu, phantom, "\lrcorner" rotate = -45, very near start]
&
\end{tikzcd}
\qquad \qquad 
\begin{tikzcd}[row sep = small, column sep = tiny]
& A_{1} \times_{B_{1}} C_{1}
\arrow[ld]
\arrow[rd]
& \\
A_{1}
& & C_{1} \\
& B_{1}
\arrow[from=lu, "f_{1}"']
\arrow[from=ru, "g_{1}"]
\arrow[from=uu, phantom, "\lrcorner" rotate = -45, very near start]
&
\end{tikzcd}
\end{equation*}
which define the object of objects and object of morphisms, 
respectively. 
The rest of the structure is defined using the universal property
of the pullback. 
Therefore \emph{internal double categories} may be defined as categories
internal to $\Kat(\E)$. 
\end{rem}

\begin{example}
An \emph{internal discrete opfibration} 
is an internal functor $f \colon A \rightarrow B$ 
such that the following diagram is a pullback: 
\begin{equation*}
\begin{tikzcd}[row sep = small, column sep = small]
& A_{1}
\arrow[ld, "d_{1}"']
\arrow[rd, "f_{1}"]
& \\
A_{0}
& & B_{1} \\
& B_{0}
\arrow[from=lu, "f_{0}"']
\arrow[from=ru, "d_{1}"]
&
\end{tikzcd}
\end{equation*}
Note the identity functor is a discrete opfibration, 
and the composite of discrete opfibrations is a discrete opfibration,
by the Pullback Pasting Lemma. 
\end{example}

\begin{defn}
Let $\DOpf(\E)$ be the category whose objects are internal categories 
and whose morphisms are discrete opfibrations.
\end{defn}

%%%%%%%%%%%%%%%%%%%%%%%%%%%%%%%%%%%%%%%%%%%%%%%%%%%%%%%%%%%%%%%%%%%%%%%%
\section{Internal cofunctors}\label{S:internalcofun}
%%%%%%%%%%%%%%%%%%%%%%%%%%%%%%%%%%%%%%%%%%%%%%%%%%%%%%%%%%%%%%%%%%%%%%%%

This section introduces the notion of an internal cofunctor and proves a
useful representation of internal cofunctors as certain spans of 
internal functors. 
Since their introduction \cite{Agu97, HM93} there has been almost no 
work on cofunctors, apart from the recent reference \cite{AU17}.
To avoid confusion, we explicitly note that a cofunctor is \emph{not} a 
contravariant functor. 

The idea of a cofunctor is to generalise discrete opfibrations, 
providing a way to lift certain morphisms while preserving identities 
and composition.
Cofunctors are dual to functors in the sense that they \emph{lift} 
morphisms in the opposite direction to the object assignment, while 
functors \emph{push-forward} morphisms in the same direction. 
In the context of synchronisation, a cofunctor will later be interpreted
as the \textsf{Put} component of an internal lens which lifts anchored 
view updates in the pullback 
$\Lambda_{1} \coloneqq A_{0} \times_{B_{0}} B_{1}$ to source updates in
$A_{1}$.

\begin{defn}\label{defn:cofunctor}
Let $A$ and $B$ be internal categories. 
An \emph{internal cofunctor} $\varphi \colon B \nrightarrow A$ consists 
of morphisms, 
\[
	\varphi_{0} \colon A_{0} \longrightarrow B_{0}
	\qquad \qquad
	\varphi_{1} \colon \Lambda_{1} \longrightarrow A_{1}
	\qquad \qquad
	p_{0} \colon \Lambda_{1} \longrightarrow A_{0}
\]
together with the pullbacks,
\begin{equation}\label{dgrm:cofunctorpb}
\begin{tikzcd}[row sep = small, column sep = small]
& \Lambda_{1}
\arrow[ld, "d_{1}"']
\arrow[rd, "\phibar_{1}"]
& \\
A_{0}
& & B_{1} \\
& B_{0}
\arrow[from=lu, "\varphi_{0}"']
\arrow[from=ru, "d_{1}"]
\arrow[from=uu, phantom, "\lrcorner" rotate = -45, very near start]
&
\end{tikzcd}
\qquad \qquad
\begin{tikzcd}[row sep = small, column sep = small]
& \Lambda_{2}
\arrow[ld, "d_{2}"']
\arrow[rd, "\phibar_{2}"]
& \\
\Lambda_{1}
& & B_{2} \\
& B_{1}
\arrow[from=lu, "\phibar_{1}"']
\arrow[from=ru, "d_{2}"]
\arrow[from=uu, phantom, "\lrcorner" rotate = -45, very near start]
&
\end{tikzcd}
\end{equation}
such that the following diagrams commute:
\begin{equation}\label{dgrm:internalcofunctor}
\begin{tikzcd}
\Lambda_{1}
\arrow[r, "p_{0}"]
\arrow[d, "\phibar_{1}"']
& A_{0}
\arrow[d, "\varphi_{0}"]
\\
B_{1}
\arrow[r, "d_{0}"']
& B_{0}
\end{tikzcd}
\quad
\begin{tikzcd}
A_{0}
\arrow[d, "1_{A_{0}}"']
&
\Lambda_{1}
\arrow[l, "d_{1}"']
\arrow[r, "p_{0}"]
\arrow[d, "\varphi_{1}"]
& A_{0}
\arrow[d, "1_{A_{0}}"]
\\
A_{0}
& 
A_{1}
\arrow[l, "d_{1}"]
\arrow[r, "d_{0}"']
& A_{0}
\end{tikzcd}
\quad
\begin{tikzcd}
A_{0}
\arrow[r, "i_{0}"]
\arrow[d, "1_{A_{0}}"']
& \Lambda_{1}
\arrow[d, "\varphi_{1}"]
\\
A_{0}
\arrow[r, "i_{0}"']
& A_{1}
\end{tikzcd}
\quad
\begin{tikzcd}
\Lambda_{2}
\arrow[r, "d_{1}"]
\arrow[d, "\varphi_{2}"']
& \Lambda_{1}
\arrow[d, "\varphi_{1}"]
\\
A_{2}
\arrow[r, "d_{1}"']
& A_{1}
\end{tikzcd}
\end{equation}
\end{defn}

\begin{rem}
The pullback projections in \eqref{dgrm:cofunctorpb} will play different
roles which prompt different notational conventions. 
The projection $d_{1} \colon \Lambda_{1} \rightarrow A_{0}$ should be 
understood as the domain map for an internal category with object of 
morphisms $\Lambda_{1}$ which will be defined in 
Proposition~\ref{prop:cofunctor}.
The projection $\phibar_{1} \colon \Lambda_{1} \rightarrow B_{1}$ should
be understood as morphism assignment for a discrete opfibration 
$\phibar$ which will be defined in Theorem~\ref{thm:cofunctor}.
The projections $d_{2}$ and $\phibar_{2}$ for $\Lambda_{2}$ may be
understood similarly. 
\end{rem}

\begin{notation}
The commutative diagrams \eqref{dgrm:internalcofunctor} include 
morphisms defined using the universal property of the pullback via the 
diagrams below:
\begin{equation}\label{dgrm:cofunctoruni}
\begin{tikzcd}[row sep = small, column sep = small]
& A_{0}
\arrow[lddd, bend right, "1_{A_{0}}"']
\arrow[rd, "\varphi_{0}"]
\arrow[dd, dashed, "i_{0}"]
& \\
& & 
B_{0}
\arrow[dd, "i_{0}"]
\\[-2.5ex]
& \Lambda_{1}
\arrow[ld, "d_{1}"']
\arrow[rd, "\phibar_{1}"]
& \\
A_{0}
& & B_{1} \\
& B_{0}
\arrow[from=lu, "\varphi_{0}"']
\arrow[from=ru, "d_{1}"]
\arrow[from=uu, phantom, "\lrcorner" rotate = -45, very near start]
&
\end{tikzcd}
\quad
\begin{tikzcd}[row sep = small, column sep = small]
& \Lambda_{2}
\arrow[ld, "d_{2}"']
\arrow[rd, "\phibar_{2}"]
\arrow[dd, dashed, "d_{1}"]
& \\
\Lambda_{1}
\arrow[dd, "d_{1}"']
& & 
B_{2}
\arrow[dd, "d_{1}"]
\\[-2.5ex]
& \Lambda_{1}
\arrow[ld, "d_{1}"']
\arrow[rd, "\phibar_{1}"]
& \\
A_{0}
& & B_{1} \\
& B_{0}
\arrow[from=lu, "\varphi_{0}"']
\arrow[from=ru, "d_{1}"]
\arrow[from=uu, phantom, "\lrcorner" rotate = -45, very near start]
&
\end{tikzcd}
\quad
\begin{tikzcd}[row sep = small, column sep = small]
& \Lambda_{2}
\arrow[ld, "d_{2}"']
\arrow[rd, "\phibar_{2}"]
\arrow[dd, dashed, "p_{0}"]
& \\
\Lambda_{1}
\arrow[dd, "p_{0}"']
& & 
B_{2}
\arrow[dd, "d_{0}"]
\\[-2.5ex]
& \Lambda_{1}
\arrow[ld, "d_{1}"']
\arrow[rd, "\phibar_{1}"]
& \\
A_{0}
& & B_{1} \\
& B_{0}
\arrow[from=lu, "\varphi_{0}"']
\arrow[from=ru, "d_{1}"]
\arrow[from=uu, phantom, "\lrcorner" rotate = -45, very near start]
&
\end{tikzcd}
\quad
\begin{tikzcd}[row sep = small, column sep = small]
& \Lambda_{2}
\arrow[ld, "d_{2}"']
\arrow[rd, "p_{0}"]
\arrow[dd, dashed, "\varphi_{2}"]
& \\
\Lambda_{1}
\arrow[dd, "\varphi_{1}"']
& & 
\Lambda_{1}
\arrow[dd, "\varphi_{1}"]
\\[-2.5ex]
& A_{2}
\arrow[ld, "d_{2}"']
\arrow[rd, "d_{0}"]
& \\
A_{1}
& & A_{1} \\
& A_{0}
\arrow[from=lu, "d_{0}"']
\arrow[from=ru, "d_{1}"]
\arrow[from=uu, phantom, "\lrcorner" rotate = -45, very near start]
&
\end{tikzcd}
\end{equation}
\end{notation}

\begin{rem}
Strictly speaking, the morphism 
$p_{0} \colon \Lambda_{1} \rightarrow A_{0}$ is not required for the
definition of a cofunctor. 
Instead the two commutative diagrams in \eqref{dgrm:internalcofunctor}
which contain it may be replaced with the commutative diagram: 
\begin{equation}
\begin{tikzcd}
\Lambda_{1}
\arrow[d, "\phibar_{1}"']
\arrow[r, "\varphi_{1}"]
& A_{1}
\arrow[r, "d_{0}"]
& A_{0}
\arrow[d, "\varphi_{0}"]
\\
B_{1}
\arrow[rr, "d_{0}"']
& & B_{0}
\end{tikzcd}
\end{equation}
\end{rem}

\begin{example}
An internal cofunctor with 
$\varphi_{1} \colon \Lambda_{1} \cong A_{1}$ is a discrete opfibration.
\end{example}

\begin{example}
An internal cofunctor between monoids, as categories with one object, 
is a monoid homomorphism. 
\end{example}

\begin{example}
An internal cofunctor with $\varphi_{0} = 1_{A_{0}}$ is an 
identity-on-objects functor. 
\end{example}

\begin{rem}
Given an internal category $A$, the \emph{identity cofunctor} consists
of morphisms:
\[
	1_{A_{0}} \colon A_{0} \longrightarrow A_{0}
	\qquad
	1_{A_{1}} \colon A_{1} \longrightarrow A_{1}
	\qquad
	d_{0} \colon A_{1} \longrightarrow A_{0}
\]
Given internal cofunctors $\varphi \colon B \nrightarrow A$ and 
$\gamma \colon C \nrightarrow B$, consisting of triples 
$(\varphi_{0}, \varphi_{1}, p_{0})$ and 
$(\gamma_{0}, \gamma_{1}, q_{0})$ respectively, their 
\emph{composite cofunctor} 
$\varphi \circ \gamma \colon C \rightarrow A$
consists of the morphism, 
\[
	\gamma_{0} \varphi_{0} \colon A_{0} \longrightarrow C_{0} \\
\]
together with the pullback $A_{0} \times_{C_{0}} C_{1}$ and the
morphisms, 
\begin{equation}\label{eqn:cofcomp}
	\varphi_{1} \langle \pi_{0}, 
	\gamma_{1}(\varphi_{0} \times 1_{C_{1}})\rangle \colon 
	A_{0} \times_{C_{0}} C_{1} \longrightarrow A_{1} 
	\qquad
	p_{0} \langle \pi_{0}, 
	\gamma_{1}(\varphi_{0} \times 1_{C_{1}})\rangle \colon 
	A_{0} \times_{C_{0}} C_{1} \longrightarrow A_{0} 
\end{equation}
where the universal morphisms are defined via the following commutative
diagram:
\begin{equation}
\begin{tikzcd}[column sep = large]
A_{0} \times_{C_{0}} C_{1} 
\arrow[d, "{\langle \pi_{0}, \gamma_{1}(\varphi_{0} \times 1_{C_{1}})\rangle}"']
\arrow[r, "\varphi_{0} \times 1_{C_{1}}"]
\arrow[rd, phantom, "\lrcorner", pos=0.05]
& \Omega_{1}
\arrow[d, "\gamma_{1}"]
\arrow[r, "\overline{\gamma}_{1}"]
\arrow[ddd, bend right, "d_{1}"' pos = 0.64]
& C_{1}
\arrow[ddd, "d_{1}"]
\\
\Lambda_{1}
\arrow[r, crossing over, "\phibar_{1}"]
\arrow[d, "\varphi_{1}"]
\arrow[dd, bend right, "d_{1}"']
& B_{1}
\arrow[dd, "d_{1}"]
& \\
A_{1}
\arrow[d, "d_{1}"]
& & \\
A_{0}
\arrow[r, "\varphi_{0}"']
& B_{0}
\arrow[r, "\gamma_{0}"']
& C_{0}
\end{tikzcd}
\end{equation}
Composition of cofunctors is both unital and associative, however we 
omit the diagram-chasing required for the proof. 
\end{rem}

\begin{defn}
Let $\Cof(\mathcal{E})$ be the category whose objects are internal 
categories and whose morphisms are internal cofunctors. 
\end{defn}

\begin{prop}\label{prop:cofunctor}
If $\varphi \colon B \nrightarrow A$ is an internal cofunctor, 
then there exists an internal category $\Lambda$ with object of objects
$A_{0}$ and object of morphisms $\Lambda_{1}$, together with 
domain map $d_{1} \colon \Lambda_{1} \rightarrow A_{0}$, 
codomain map $p_{0} \colon \Lambda_{1} \rightarrow A_{0}$, 
identity map $i_{0} \colon A_{0} \rightarrow \Lambda_{1}$, 
and composition map $d_{1} \colon \Lambda_{2} \rightarrow \Lambda_{1}$. 
\end{prop}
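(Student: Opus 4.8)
The plan is to verify directly that the listed data satisfy the four commutative diagrams of Definition~\ref{defn:internalcat}, drawing on the cofunctor axioms \eqref{dgrm:internalcofunctor}, the category axioms of $A$ and $B$, and repeated use of the Pullback Pasting Lemma. The guiding picture is that $\Lambda$ is the internal \emph{category of elements} of the $B$-action on $A_{0}$ encoded by $(\varphi_{0}, p_{0})$: a morphism of $\Lambda$ is an object $a$ of $A$ together with a $B$-morphism out of $\varphi_{0}a$, with domain recorded by $d_{1} \colon \Lambda_{1} \to A_{0}$ and lifted codomain recorded by $p_{0} \colon \Lambda_{1} \to A_{0}$. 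To keep the notation readable I would write $m \colon \Lambda_{2} \to \Lambda_{1}$ for the composition map (the morphism named $d_{1}$ in the statement), reserving $d_{1}$ for the domain map $\Lambda_{1} \to A_{0}$.

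The linchpin is a structural identification: that $\Lambda_{2}$, defined in \eqref{dgrm:cofunctorpb} as $\Lambda_{1} \times_{B_{1}} B_{2}$, is genuinely the object of composable pairs for the span $(d_{1}, p_{0})$, i.e.\ the pullback of $p_{0}$ against $d_{1}$ over $A_{0}$. Since $\Lambda_{1} = A_{0} \times_{B_{0}} B_{1}$ and $B_{2} = B_{1} \times_{B_{0}} B_{1}$, pasting pullbacks exhibits each of $\Lambda_{1} \times_{A_{0}} \Lambda_{1}$ and $\Lambda_{1} \times_{B_{1}} B_{2}$ as a pullback of a map $\Lambda_{1} \to B_{0}$ against $d_{1} \colon B_{1} \to B_{0}$, the two maps being $\varphi_{0} p_{0}$ and $d_{0} \phibar_{1}$ respectively; these coincide precisely by the first square of \eqref{dgrm:internalcofunctor}. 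I would run the analogous argument to build the object of composable triples $\Lambda_{3}$ and identify it with $\Lambda_{1} \times_{A_{0}} \Lambda_{1} \times_{A_{0}} \Lambda_{1}$, so that the pullbacks required by Definition~\ref{defn:internalcat} are available and the structure maps have the asserted sources and targets.

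With composable pairs and triples in hand, the four axioms reduce to diagram chases that all factor through the two projections of the pullback $\Lambda_{1}$, namely $d_{1}$ into $A_{0}$ and $\phibar_{1}$ into $B_{1}$. The source and target of identities, $d_{1} i_{0} = 1_{A_{0}}$ and $p_{0} i_{0} = 1_{A_{0}}$, follow from the definition of $i_{0}$ in \eqref{dgrm:cofunctoruni} together with the equalities $p_{0} = d_{0}\varphi_{1}$ and $\varphi_{1} i_{0} = i_{0}$ from the second and third squares of \eqref{dgrm:internalcofunctor} and the unit law $d_{0} i_{0} = 1_{A_{0}}$ of $A$. Compatibility of composition with the domain is immediate from the defining equation of $m$, while compatibility with the codomain is obtained by rewriting $p_{0} m = d_{0}\varphi_{1} m = d_{0} d_{1} \varphi_{2}$ via the fourth square of \eqref{dgrm:internalcofunctor} and then applying the composition-codomain law of $A$ (together with the defining equation of $\varphi_{2}$). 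The unit laws transport the corresponding laws of $B$ through $\phibar_{1}$, using that $i_{0}$ lifts the identities of $B$.

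I expect the main obstacle to be the construction of $\Lambda_{3}$ and the verification of associativity. Conceptually associativity is clean, since projecting $m(m \times 1)$ and $m(1 \times m)$ along $d_{1}$ gives the common domain of the first factor, and projecting along $\phibar_{1}$ gives the triple composite in $B$, so the two agree by associativity in $B$ and the universal property of $\Lambda_{1}$; but assembling $\Lambda_{3}$, defining the two bracketing maps $\Lambda_{3} \to \Lambda_{2}$, and tracking all the pullback projections through the universal properties is notation-heavy. Since every structure map is defined by a universal property, I would organise the whole proof as a sequence of pasted pullback squares, checking each identity on its two components into $A_{0}$ and $B_{1}$ separately, rather than attempting an element-wise argument.
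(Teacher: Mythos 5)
Your proposal is correct and takes essentially the same route as the paper: the identical key step of exhibiting $\Lambda_{2}$ as the pullback of $p_{0}$ against $d_{1}$ via the Pullback Pasting Lemma (using $\varphi_{0}p_{0} = d_{0}\phibar_{1}$ from the first square of \eqref{dgrm:internalcofunctor}), and the same component-wise chases through $d_{1}$ and $\phibar_{1}$ using $p_{0} = d_{0}\varphi_{1}$ and the fourth cofunctor square. If anything you go further than the paper, which explicitly gives only a partial proof covering the first pair of diagrams in \eqref{dgrm:internalcat}, whereas you also sketch the unit laws and the $\Lambda_{3}$-based associativity argument it omits.
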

\begin{proof}
We give a partial proof and show the first pair of diagrams in 
\eqref{dgrm:internalcat} are satisfied. 
Using the relevant diagrams from Definition~\ref{defn:internalcat} 
and Definition~\ref{defn:cofunctor} we have the following commutative 
diagram: 
\begin{equation*}
\begin{tikzcd}[row sep = small]
& & A_{0}
\arrow[lldd, "1_{A_{0}}"']
\arrow[dd, "i_{0}"']
\arrow[rd, "i_{0}"']
\arrow[rrdd, bend left, "1_{A_{0}}"]
& & \\
& & & A_{1}
\arrow[rd, "d_{0}"' pos = 0.4]
& \\
A_{0}
& & \Lambda_{1}
\arrow[ru, "\varphi_{1}"]
\arrow[ll, "d_{1}"]
\arrow[rr, "p_{0}"']
& & A_{0}
\end{tikzcd}
\end{equation*}
This shows that the identity map 
$i \colon A_{0} \rightarrow \Lambda_{1}$ is well-defined. 

To show that $\Lambda_{2}$ is well-defined as the the pullback of the 
domain and codomain maps (left-most square below) 
we use the Pullback Pasting Lemma, noting that the outer rectangles 
below are equal: 
\begin{equation*}
\begin{tikzcd}
\Lambda_{2} 
\arrow[d, "d_{2}"']
\arrow[r, "p_{0}"]
& \Lambda_{1}
\arrow[d, "d_{1}"']
\arrow[r, "\phibar_{1}"]
\arrow[rd, phantom, "\lrcorner" very near start]
& B_{1}
\arrow[d, "d_{1}"]
\\
\Lambda_{1}
\arrow[r, "p_{0}"']
& A_{0}
\arrow[r, "\varphi_{0}"']
& B_{0}
\end{tikzcd}
\qquad = \qquad
\begin{tikzcd}
\Lambda_{2} 
\arrow[d, "d_{2}"']
\arrow[r, "\phibar_{2}"]
\arrow[rd, phantom, "\lrcorner" very near start]
& B_{2}
\arrow[rd, phantom, "\lrcorner" very near start]
\arrow[r, "d_{0}"]
\arrow[d, "d_{2}"']
& B_{1}
\arrow[d, "d_{1}"]
\\
\Lambda_{1}
\arrow[r, "\phibar_{1}"']
& B_{1}
\arrow[r, "d_{0}"']
& B_{0}
\end{tikzcd}
\end{equation*}

Again using the relevant diagrams from Definition~\ref{defn:internalcat} 
and Definition~\ref{defn:cofunctor} we have the following commutative 
diagram:
\begin{equation*}
\begin{tikzcd}[row sep = small]
\Lambda_{1}
\arrow[ddd, "d_{1}"']
& & & 
\Lambda_{2}
\arrow[lll, "d_{2}"']
\arrow[rrr, "p_{0}"]
\arrow[ddd, "d_{1}"']
\arrow[rd, "\varphi_{2}"']
& & & 
\Lambda_{1}
\arrow[ld, "\varphi_{1}"]
\arrow[ddd, "p_{0}"]
\\
& & & & 
A_{2}
\arrow[d, "d_{1}"']
\arrow[r, "d_{0}"]
& A_{1}
\arrow[d, "d_{0}"]
& \\[+1.5ex]
& & & & 
A_{1}
\arrow[r, "d_{0}"']
& A_{0}
\arrow[rd, equal]
& \\
A_{0}
& & & 
\Lambda_{1}
\arrow[lll, "d_{1}"]
\arrow[rrr, "p_{0}"']
\arrow[ru, "\varphi_{1}"]
& & & 
A_{0}
\end{tikzcd}
\end{equation*}
This shows that the composition map 
$d_{1} \colon \Lambda_{2} \rightarrow \Lambda_{1}$ is well-defined.
\end{proof}

\begin{rem}
Proposition~\ref{prop:cofunctor} may be understood as showing that a
cofunctor induces a category whose objects are source states and whose 
morphisms are anchored view updates. 
The internal category $\Lambda$ is shown in Theorem~\ref{thm:cofunctor} 
to mediate between the source and the view, 
and reduces the complexity of Definition~\ref{defn:cofunctor}
to a simple statement concerning internal categories and functors. 
\end{rem}

\begin{theorem}\label{thm:cofunctor}
If $\varphi \colon B \nrightarrow A$ is an internal cofunctor, 
then there is an internal discrete opfibration 
$\phibar \colon \Lambda \rightarrow B$ consisting of the morphisms,
\[
	\varphi_{0} \colon A_{0} \longrightarrow B_{0}
	\qquad \qquad
	\phibar_{1} \colon \Lambda_{1} \longrightarrow B_{1}
\] 
and an identity-on-objects internal functor 
$\varphi \colon \Lambda \rightarrow A$ consisting of morphisms:
\[
	1_{A_{0}} \colon A_{0} \longrightarrow A_{0}
	\qquad \qquad
	\varphi_{1} \colon \Lambda_{1} \longrightarrow A_{1}
\]
Thus every internal cofunctor $\varphi \colon B \nrightarrow A$ may be 
represented as a span of internal functors:
\begin{equation*}
\begin{tikzcd}[column sep = small]
& \Lambda
\arrow[ld, "\phibar"']
\arrow[rd, "\varphi"]
& \\
B 
& & A
\end{tikzcd}
\end{equation*}
\end{theorem}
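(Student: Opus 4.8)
The plan is to build the span directly on top of the internal category $\Lambda$ already supplied by Proposition~\ref{prop:cofunctor}, whose object of objects is $A_{0}$, whose object of morphisms is $\Lambda_{1}$, with domain map $d_{1} \colon \Lambda_{1} \rightarrow A_{0}$, codomain map $p_{0} \colon \Lambda_{1} \rightarrow A_{0}$, identity $i_{0}$, and composition $d_{1} \colon \Lambda_{2} \rightarrow \Lambda_{1}$. Because $\Lambda$ has object of objects $A_{0}$, the two stated pairs $(\varphi_{0}, \phibar_{1})$ and $(1_{A_{0}}, \varphi_{1})$ already have the correct types to be candidate internal functors $\Lambda \rightarrow B$ and $\Lambda \rightarrow A$. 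The whole proof then reduces to verifying the three axioms of Definition~\ref{defn:internalfunctor} for each candidate, and in every case the required equation is literally one of the diagrams recorded in Definition~\ref{defn:cofunctor} or in the notation following it.

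First I would treat $\phibar = (\varphi_{0}, \phibar_{1}) \colon \Lambda \rightarrow B$. Compatibility with the domain maps, $\varphi_{0} d_{1} = d_{1} \phibar_{1}$, is exactly the commutativity of the defining pullback square for $\Lambda_{1}$ in \eqref{dgrm:cofunctorpb}; compatibility with the codomain maps, $\varphi_{0} p_{0} = d_{0} \phibar_{1}$, is the left-hand square of \eqref{dgrm:internalcofunctor}; preservation of identities, $\phibar_{1} i_{0} = i_{0} \varphi_{0}$, is read off the first universal-property diagram in \eqref{dgrm:cofunctoruni}; and preservation of composition, $\phibar_{1} d_{1} = d_{1} \phibar_{2}$, is the commutativity expressed by the second diagram in \eqref{dgrm:cofunctoruni}, with $\phibar_{2}$ the pullback projection of the second square in \eqref{dgrm:cofunctorpb}. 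I would then observe that $\phibar$ is a discrete opfibration essentially for free: once the domain map of $\Lambda$ is identified as $d_{1}$, the pullback condition of the Example defining internal discrete opfibrations is precisely the first pullback square of \eqref{dgrm:cofunctorpb}.

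Next I would treat $\varphi = (1_{A_{0}}, \varphi_{1}) \colon \Lambda \rightarrow A$. Here compatibility with the domain and codomain maps, $d_{1} \varphi_{1} = d_{1}$ and $d_{0} \varphi_{1} = p_{0}$, are the two squares of the middle diagram of \eqref{dgrm:internalcofunctor}; preservation of identities, $\varphi_{1} i_{0} = i_{0}$, is the third diagram there; and preservation of composition, $\varphi_{1} d_{1} = d_{1} \varphi_{2}$ with $\varphi_{2} \colon \Lambda_{2} \rightarrow A_{2}$ the morphism induced in the fourth diagram of \eqref{dgrm:cofunctoruni}, is the fourth diagram of \eqref{dgrm:internalcofunctor}. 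Since the object component of $\varphi$ is $1_{A_{0}}$, this functor is identity-on-objects by inspection. Assembling the two legs $\phibar \colon \Lambda \rightarrow B$ and $\varphi \colon \Lambda \rightarrow A$ then yields the claimed span representation.

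The argument is thus almost entirely a matter of matching each functor axiom to a diagram already in place, with no genuine computation to perform. The one point requiring care -- and the closest thing to an obstacle -- is the bookkeeping of the induced morphisms playing the role of $f_{2}$ in Definition~\ref{defn:internalfunctor}. For $\phibar$ I must confirm that the pullback projection $\phibar_{2}$ coincides with the functorially induced pairing $\langle \phibar_{1} d_{2}, \phibar_{1} p_{0} \rangle \colon \Lambda_{2} \rightarrow B_{2}$, which follows by checking its two components against the defining equations of the second and third diagrams of \eqref{dgrm:cofunctoruni}; and for $\varphi$ I must likewise identify $\varphi_{2}$ with the corresponding pairing into $A_{2}$. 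Only after these identifications is the composition axiom in each case genuinely the single stated diagram rather than one of its components.
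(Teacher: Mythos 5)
Your proposal is correct and follows essentially the same route as the paper's proof: each internal functor axiom for $\phibar = (\varphi_{0}, \phibar_{1})$ and $\varphi = (1_{A_{0}}, \varphi_{1})$ is matched to the corresponding diagram in \eqref{dgrm:cofunctorpb}, \eqref{dgrm:internalcofunctor}, and \eqref{dgrm:cofunctoruni}, with the discrete opfibration condition read off the first pullback square of \eqref{dgrm:cofunctorpb}. Your closing check that the pullback projections $\phibar_{2}$ and $\varphi_{2}$ agree with the induced morphisms playing the role of $f_{2}$ in Definition~\ref{defn:internalfunctor} is a point the paper leaves implicit, and you resolve it correctly by comparing components via the second and third (respectively fourth) diagrams of \eqref{dgrm:cofunctoruni}.
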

\begin{proof}
To show that $\phibar \colon \Lambda \rightarrow B$ is a well-defined 
internal discrete opfibration, we note from \eqref{dgrm:cofunctorpb}, 
\eqref{dgrm:internalcofunctor}, and \eqref{dgrm:cofunctoruni} that the 
following diagrams commute: 
\begin{equation*}
\begin{tikzcd}
A_{0}
\arrow[d, "\varphi_{0}"']
& \Lambda_{1}
\arrow[l, "d_{1}"']
\arrow[r, "p_{0}"]
\arrow[d, "\phibar_{1}"]
\arrow[ld, phantom, "\llcorner" very near start, rotate = -45]
& A_{0}
\arrow[d, "\varphi_{0}"]
\\
B_{0}
& B_{1}
\arrow[l, "d_{1}"]
\arrow[r, "d_{0}"']
& B_{0}
\end{tikzcd}
\qquad
\begin{tikzcd}
A_{0}
\arrow[d, "\varphi_{0}"']
\arrow[r, "i_{0}"]
& \Lambda_{1}
\arrow[d, "\phibar_{1}"]
\\
B_{0}
\arrow[r, "i_{0}"']
& B_{1}
\end{tikzcd}
\qquad
\begin{tikzcd}
\Lambda_{2}
\arrow[d, "\phibar_{2}"']
\arrow[r, "d_{1}"]
& \Lambda_{1}
\arrow[d, "\phibar_{1}"]
\\
B_{2}
\arrow[r, "d_{1}"']
& B_{1}
\end{tikzcd}
\end{equation*}
To show that $\varphi \colon \Lambda \rightarrow A$ is a well-defined 
identity-on-objects internal functor, we again note from 
\eqref{dgrm:cofunctorpb}, \eqref{dgrm:internalcofunctor}, 
and \eqref{dgrm:cofunctoruni} that the following diagrams commute: 
\begin{equation*}
\begin{tikzcd}
A_{0}
\arrow[d, "1_{A_{0}}"']
& \Lambda_{1}
\arrow[l, "d_{1}"']
\arrow[r, "p_{0}"]
\arrow[d, "\varphi_{1}"]
& A_{0}
\arrow[d, "1_{A_{0}}"]
\\
A_{0}
& A_{1}
\arrow[l, "d_{1}"]
\arrow[r, "d_{0}"']
& A_{0}
\end{tikzcd}
\qquad
\begin{tikzcd}
A_{0}
\arrow[d, "1_{A_{0}}"']
\arrow[r, "i_{0}"]
& \Lambda_{1}
\arrow[d, "\varphi_{1}"]
\\
A_{0}
\arrow[r, "i_{0}"']
& A_{1}
\end{tikzcd}
\qquad
\begin{tikzcd}
\Lambda_{2}
\arrow[d, "\varphi_{2}"']
\arrow[r, "d_{1}"]
& \Lambda_{1}
\arrow[d, "\varphi_{1}"]
\\
A_{2}
\arrow[r, "d_{1}"']
& A_{1}
\end{tikzcd}
\end{equation*}
Thus every internal cofunctor may be represented as a span of internal 
functors, with left-leg an internal discrete opfibration, and 
right-leg an identity-on-objects internal functor. 
\end{proof}

%%%%%%%%%%%%%%%%%%%%%%%%%%%%%%%%%%%%%%%%%%%%%%%%%%%%%%%%%%%%%%%%%%%%%%%%
\section{Internal Lenses}
%%%%%%%%%%%%%%%%%%%%%%%%%%%%%%%%%%%%%%%%%%%%%%%%%%%%%%%%%%%%%%%%%%%%%%%%

In this section we define an internal lens to consist of an 
internal \textsf{Get} functor and an internal \textsf{Put} cofunctor 
satisfying a simple axiom akin to the \textsf{Put-Get} law. 
An immediate corollary of Theorem~\ref{thm:cofunctor} is that every
internal lens may be understood as a particular commuting triangle 
\eqref{dgrm:lenstriangle} of internal functors. 
We also construct a category whose objects are internal categories and 
whose morphisms are internal lenses. 
The section concludes with a unification of discrete opfibrations, 
state-based lenses, c-lenses, and d-lenses in this internal framework,
based upon results in \cite{Cla18}.

\begin{defn}\label{defn:internallens}
An \emph{internal lens} $(f, \varphi) \colon A \br B$ 
consists of an internal functor $f \colon A \rightarrow B$ 
comprised of morphisms,
\[
	f_{0} \colon A_{0} \longrightarrow B_{0}
	\qquad \qquad
	f_{1} \colon A_{1} \longrightarrow B_{1}
\]
and an internal cofunctor $\varphi \colon B \nrightarrow A$ comprised 
of morphisms, 
\[
	\varphi_{0} \colon A_{0} \longrightarrow B_{0}
	\qquad \qquad
	\varphi_{1} \colon \Lambda_{1} \longrightarrow A_{1}
	\qquad \qquad
	p_{0} \colon \Lambda_{1} \longrightarrow A_{0}
\]
such that $\varphi_{0} = f_{0}$ and the following diagram commutes: 
\begin{equation}\label{dgrm:internallens}
\begin{tikzcd}[column sep = small]
& \Lambda_{1}
\arrow[ld, "\varphi_{1}"']
\arrow[rd, "1_{\Lambda_{1}}"]
& \\
A_{1}
\arrow[rr, "{\langle d_{1}, f_{1} \rangle}"']
& & \Lambda_{1}
\end{tikzcd}
\end{equation}
\end{defn}

\begin{rem}
Alternatively, the commutative diagram \eqref{dgrm:internallens} for an 
internal lens may be 
replaced with the requirement that the following diagram commutes: 
\begin{equation}
\begin{tikzcd}[column sep = small]
& \Lambda_{1}
\arrow[ld, "\varphi_{1}"']
\arrow[rd, "\phibar_{1}"]
& \\
A_{1}
\arrow[rr, "f_{1}"']
& & B_{1}
\end{tikzcd}
\end{equation}
In either case, this axiom for an internal lens ensures that the functor
and cofunctor parts interact as expected. 
Explicitly it states that \emph{lifting} a morphism by the cofunctor 
then \emph{pushing-forward} by the functor should return the original 
morphism.
\end{rem}

\begin{cor}
Every internal lens $(f, \varphi) \colon A \br B$ may be represented as 
a commuting triangle of internal functors,
\begin{equation}\label{dgrm:lenstriangle}
\begin{tikzcd}[column sep = small]
& \Lambda
\arrow[ld, "\varphi"']
\arrow[rd, "\phibar"]
& \\
A 
\arrow[rr, "f"']
& & B
\end{tikzcd}
\end{equation}
where $\phibar \colon \Lambda \rightarrow B$ is an internal discrete 
opfibration, and $\varphi \colon \Lambda \rightarrow A$ is an 
identity-on-objects internal functor. 
\end{cor}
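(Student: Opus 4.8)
The plan is to obtain the triangle directly from Theorem~\ref{thm:cofunctor} applied to the cofunctor component of the lens, and then to observe that the single lens axiom is precisely what makes that triangle commute.

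First I would apply Theorem~\ref{thm:cofunctor} to the internal cofunctor $\varphi \colon B \nrightarrow A$. This produces the internal category $\Lambda$, the internal discrete opfibration $\phibar \colon \Lambda \rightarrow B$ with components $\varphi_{0}$ and $\phibar_{1}$, and the identity-on-objects internal functor $\varphi \colon \Lambda \rightarrow A$ with components $1_{A_{0}}$ and $\varphi_{1}$. These are exactly the two legs of the triangle \eqref{dgrm:lenstriangle}, and the stated properties of each leg are immediate from the theorem. It therefore remains only to prove that the triangle commutes, that is, that $f \circ \varphi = \phibar$ as internal functors $\Lambda \rightarrow B$.

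Since an internal functor is determined by its object and morphism components, I would verify this equation componentwise in $\E$. On objects the composite $f \circ \varphi$ has component $f_{0} \circ 1_{A_{0}} = f_{0}$, while $\phibar$ has object component $\varphi_{0}$; these agree because Definition~\ref{defn:internallens} imposes $\varphi_{0} = f_{0}$. On morphisms the composite has component $f_{1} \circ \varphi_{1}$, while $\phibar$ has morphism component $\phibar_{1}$, so commutativity reduces to the single equation $f_{1} \circ \varphi_{1} = \phibar_{1}$.

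The only genuinely new input is this last equation, which is exactly the lens axiom in the reformulation given in the remark following Definition~\ref{defn:internallens}. If one instead works from the original form \eqref{dgrm:internallens}, stating $\langle d_{1}, f_{1} \rangle \circ \varphi_{1} = 1_{\Lambda_{1}}$, then postcomposing both sides with the pullback projection $\phibar_{1} \colon \Lambda_{1} \rightarrow B_{1}$ and using $\phibar_{1} \circ \langle d_{1}, f_{1} \rangle = f_{1}$ recovers the same equation. I expect no substantive obstacle: the real work has already been done in Theorem~\ref{thm:cofunctor}, and the corollary merely records that the lens axiom forces the discrete-opfibration leg to factor as $f \circ \varphi$. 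The one point requiring a moment of care is to invoke the axiom in its triangular form $f_{1} \circ \varphi_{1} = \phibar_{1}$, rather than the original form phrased via $\langle d_{1}, f_{1} \rangle$, so that no further diagram-chasing is needed.
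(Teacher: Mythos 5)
Your proposal is correct and matches the paper's (implicit) argument exactly: the paper labels this result an immediate corollary of Theorem~\ref{thm:cofunctor}, with commutativity of the triangle supplied by the lens axiom in the form $f_{1}\varphi_{1} = \phibar_{1}$ from the remark following Definition~\ref{defn:internallens}, together with $\varphi_{0} = f_{0}$. Your componentwise verification, including the derivation of $f_{1}\varphi_{1} = \phibar_{1}$ from \eqref{dgrm:internallens} by postcomposing with the pullback projection $\phibar_{1}$, is precisely the detail the paper leaves to the reader.
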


\begin{cor}
Given a pair of internal lenses $(f, \varphi) \colon A \br B$ and 
$(g, \gamma) \colon B \br C$, their composite internal lens may be 
computed via the composition of the respective functor and cofunctor 
parts, and has a simple representation using the pullback of internal 
functors:
\begin{equation}
\begin{tikzcd}[row sep = small, column sep = small]
&[+1em] & \Lambda \times_{B} \Omega 
\arrow[ld]
\arrow[rd]
\arrow[dd, phantom, very near start, "\lrcorner" rotate = -45]
& &[+1em] \\
& \Lambda 
\arrow[ld, "\varphi"']
\arrow[rd, "\phibar"]
& & \Omega
\arrow[ld, "\gamma"']
\arrow[rd, "\overline{\gamma}"]
& \\
A
\arrow[rr, "f"']
& & B
\arrow[rr, "g"']
& & C
\end{tikzcd}
\end{equation}
\end{cor}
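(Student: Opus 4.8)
The plan is to prove this entirely through the triangle representation \eqref{dgrm:lenstriangle} furnished by Theorem~\ref{thm:cofunctor}, reducing the statement to a claim about pullbacks of internal functors in $\Kat(\E)$. By Theorem~\ref{thm:cofunctor} the two lenses are encoded by commuting triangles with legs $\varphi \colon \Lambda \to A$, $\phibar \colon \Lambda \to B$ and $\gamma \colon \Omega \to B$, $\overline{\gamma} \colon \Omega \to C$, where $\phibar, \overline{\gamma}$ are internal discrete opfibrations and $\varphi, \gamma$ are identity-on-objects internal functors. Since $\Kat(\E)$ has pullbacks, I would first form $\Lambda \times_{B} \Omega$ as the pullback of $\phibar$ and $\gamma$, and take the candidate composite to be the outer triangle with left leg $\varphi \circ \pi_{\Lambda}$, right leg $\overline{\gamma} \circ \pi_{\Omega}$, and base $g \circ f \colon A \to C$.

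Next I would verify that this outer triangle is again a valid lens triangle, so that by the correspondence of Theorem~\ref{thm:cofunctor} it defines a lens and its axiom \eqref{dgrm:internallens} is automatically encoded by commutativity. The object-condition $(\varphi \circ \gamma)_{0} = \gamma_{0}\varphi_{0} = g_{0}f_{0} = (g \circ f)_{0}$ follows at once from the defining equalities $\varphi_{0} = f_{0}$ and $\gamma_{0} = g_{0}$. For the right leg, the projection $\pi_{\Omega}$ is a discrete opfibration because discrete opfibrations are stable under pullback — a single application of the Pullback Pasting Lemma, using $\Lambda_{1} \cong A_{0} \times_{B_{0}} B_{1}$ from \eqref{dgrm:cofunctorpb} — and its composite with the discrete opfibration $\overline{\gamma}$ is again a discrete opfibration. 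On objects the pullback has object of objects $A_{0} \times_{B_{0}} B_{0} \cong A_{0}$, so $\pi_{\Lambda}$ is identity-on-objects and hence so is $\varphi \circ \pi_{\Lambda}$. Commutativity is then a short paste: $g \circ f \circ \varphi \circ \pi_{\Lambda} = g \circ \phibar \circ \pi_{\Lambda} = g \circ \gamma \circ \pi_{\Omega} = \overline{\gamma} \circ \pi_{\Omega}$, using the two given lens triangles and the defining square of the pullback.

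It then remains to identify this triangle with the lens obtained by composing the functor and cofunctor parts directly, so that the span of $\varphi \circ \gamma$ is exactly $\Lambda \times_{B} \Omega$. The base is $g \circ f$ by construction, and the object of morphisms of the pullback is $\Lambda_{1} \times_{B_{1}} \Omega_{1}$; I would compute $\Lambda_{1} \times_{B_{1}} \Omega_{1} \cong A_{0} \times_{C_{0}} C_{1}$ by repeated pullback cancellation. Substituting $\Lambda_{1} \cong A_{0} \times_{B_{0}} B_{1}$ and invoking the cofunctor domain axiom $d_{1} \circ \gamma_{1} = d_{1}$ from \eqref{dgrm:internalcofunctor} collapses the fibre over $B_{1}$ and leaves precisely the pullback of $\gamma_{0}\varphi_{0}$ against $d_{1}$, which is the object of morphisms appearing in the cofunctor composite \eqref{eqn:cofcomp}. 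Checking that the domain, codomain, identity, and composition maps of $\Lambda \times_{B} \Omega$ agree with those induced by \eqref{eqn:cofcomp} is then a diagram chase of the same flavour as Proposition~\ref{prop:cofunctor}.

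The main obstacle is this last identification carried out internally, without elements: each pullback collapse must be justified by the Pullback Pasting Lemma and a universal property, and the composition map in particular requires care, since it lives over the triple-object $\Lambda_{2} \times_{B_{2}} \Omega_{2}$ and must be matched against the composition map of the span of $\varphi \circ \gamma$. Everything else — existence of the pullback in $\Kat(\E)$, the discrete-opfibration and identity-on-objects properties of the two legs, and commutativity of the triangle — is formal, following from stability of discrete opfibrations under pullback together with the two given lens triangles.
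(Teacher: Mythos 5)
Your proposal is sound and is essentially the argument the paper intends but leaves implicit: the corollary is stated without proof, as a consequence of Theorem~\ref{thm:cofunctor} together with the remarks defining composition of internal functors and internal cofunctors, and your third paragraph supplies precisely the omitted verification, namely the pullback-cancellation computation $\Lambda_{1} \times_{B_{1}} \Omega_{1} \cong A_{0} \times_{B_{0}} \Omega_{1} \cong A_{0} \times_{C_{0}} C_{1}$ (using $d_{1}\gamma_{1} = d_{1}$ from \eqref{dgrm:internalcofunctor}) identifying the span of $\varphi \circ \gamma$ from \eqref{eqn:cofcomp} with $\Lambda \times_{B} \Omega$. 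One caution: in your second paragraph you invoke ``the correspondence of Theorem~\ref{thm:cofunctor}'' to conclude that a commuting triangle whose legs are a discrete opfibration and an identity-on-objects functor \emph{defines} a lens, but the theorem as proved in the paper goes only one way (cofunctor to span); the converse is not established here. This does not damage your argument, because your direct identification renders the appeal unnecessary: once the span of $\varphi \circ \gamma$ is identified with the pullback, the alternative form of the lens axiom for the composite --- commutativity of \eqref{dgrm:lenstriangle} on morphism parts, i.e.\ $f_{1}\varphi_{1} = \phibar_{1}$ --- is exactly the outer-triangle commutativity $g \circ f \circ \varphi \circ \pi_{\Lambda} = \overline{\gamma} \circ \pi_{\Omega}$ that you compute by pasting, so $(g \circ f, \varphi \circ \gamma)$ is a lens with the stated representation, and you should present the argument in that order rather than leaning on an unproven converse. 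Your remaining checks are all correct: the object of objects of the pullback is $A_{0} \times_{B_{0}} B_{0} \cong A_{0}$ since $\gamma_{0} = 1_{B_{0}}$, so $\pi_{\Lambda}$ is identity-on-objects; stability of discrete opfibrations under pullback is routine (though it takes two pullback pastings, or one after identifying the object of objects with $A_{0}$, rather than the single application you claim); and the matching of identity and composition maps over $\Lambda_{2} \times_{B_{2}} \Omega_{2}$ is indeed a diagram chase of the same kind as Proposition~\ref{prop:cofunctor}.
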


\begin{defn}
Let $\Lens(\E)$ be the category whose objects are internal 
categories and whose morphisms are internal lenses.
Composition of internal lenses is determined by composition of the 
corresponding functor and cofunctor parts. 
\end{defn}

\begin{example}
Every discrete opfibration is both an internal functor and an internal 
cofunctor, hence also an internal lens. 
Therefore $\DOpf(\E)$ is a wide subcategory of $\Lens(\E)$. 
\end{example}

\begin{example}
If $\mathcal{E} = \Set$, then the category $\Lens(\Set)$ is the category
of d-lenses \cite{DXC11}. 
The \textsf{Get} of a d-lens $A \br B$ is given by a
functor $f \colon A \rightarrow B$, while the \textsf{Put} of a d-lens 
is given by a cofunctor  $\varphi \colon B \nrightarrow A$. 

In particular, the function 
$\varphi_{1} \colon \Lambda_{1} \rightarrow A_{1}$ takes 
each pair $(a, u \colon fa \rightarrow b) \in \Lambda_{1}$ to a morphism
$\varphi(a, u) \colon a \rightarrow p(a, u) \in A$, as illustrated in 
the diagram below. 
\begin{equation}
	\begin{tikzcd}
	A
	\arrow[d, harpoon', shift right, "f"']
	\arrow[d, leftharpoonup, shift left, "\varphi"]
	&[+1em] a
	\arrow[d, phantom, "\vdots"]
	\arrow[r, "{\varphi(a, u)}"]
	& p(a, u)
	\arrow[d, phantom, "\vdots"]
	\\
	B
	& fa
	\arrow[r, "u"]
	& b
	\end{tikzcd}
\end{equation}
The \textsf{Put-Get} law is satisfied by \eqref{dgrm:internallens},
which corresponds in the above diagram to the morphism $\varphi(a, u)$ 
being a genuine lift of $u \colon fa \rightarrow b$ with respect to the
functor acting on morphisms. 
The \textsf{Get-Put} and \textsf{Put-Put} laws are satisfied as 
$\varphi \colon \Lambda \rightarrow A$ is a functor, 
which respects identities and composition by definition. 
\end{example}

\begin{example} 
Every state-based lens (see \cite{FGMPS07}) consisting of 
\textsf{Get} function $f \colon A \rightarrow B$ and \textsf{Put} 
function $p \colon A \times B \rightarrow A$ induces a lens in 
$\Lens(\Set)$. 

Let $\widehat{A}$ and $\widehat{B}$ be the small codiscrete categories 
induced by the sets $A$ and $B$, respectively, and let 
$f \colon \widehat{A} \rightarrow \widehat{B}$ be the canonical functor,
\begin{equation*}
\begin{tikzcd}
A 
\arrow[d, "f"']
& A \times A
\arrow[l, "\pi_{0}"']
\arrow[r, "\pi_{1}"]
\arrow[d, "f \times f"]
& A 
\arrow[d, "f"]
\\
B 
& B \times B
\arrow[l, "\pi_{0}"]
\arrow[r, "\pi_{1}"']
& B
\end{tikzcd}
\end{equation*}
induced by the \textsf{Get} function. 
Let $\Lambda$ be the category with domain and codomain maps described 
by the span: 
\begin{equation*}
\begin{tikzcd}[row sep = small, column sep = tiny]
& A \times B
\arrow[ld, "\pi_{0}"']
\arrow[rd, "p"]
& \\
A
& & A
\end{tikzcd}
\end{equation*}
The category $\Lambda$ is well-defined by the lens laws. 
The functor $\phibar \colon \Lambda \rightarrow \widehat{B}$ is induced
using the \textsf{Put-Get} law, 
\begin{equation*}
\begin{tikzcd}
A 
\arrow[d, "f"']
& A \times B
\arrow[ld, phantom, "\llcorner" very near start, rotate = -45]
\arrow[l, "\pi_{0}"']
\arrow[r, "p"]
\arrow[d, "f \times 1_{B}"]
& A 
\arrow[d, "f"]
\\
B 
& B \times B
\arrow[l, "\pi_{0}"]
\arrow[r, "\pi_{1}"']
& B
\end{tikzcd}
\end{equation*}
while the functor $\varphi \colon \Lambda \rightarrow \widehat{A}$ is
induced for free: 
\begin{equation*}
\begin{tikzcd}
A 
\arrow[d, "1_{A}"']
& A \times B
\arrow[l, "\pi_{0}"']
\arrow[r, "p"]
\arrow[d, "{\langle \pi_{0}, p \rangle}"]
& A 
\arrow[d, "1_{A}"]
\\
A 
& A \times A
\arrow[l, "\pi_{0}"]
\arrow[r, "\pi_{1}"']
& A
\end{tikzcd}
\end{equation*}
This example may be instantiated internal to any category $\E$ with 
finite limits. 
\end{example}

\begin{example}
Given a pair of state-based lenses $(f, p) \colon A \br B$ and 
$(g, q) \colon B \br C$, 
their composite is a lens whose \textsf{Get} function is given 
by $gf \colon A \rightarrow C$ and whose \textsf{Put} function may be 
computed from the formula \eqref{eqn:cofcomp}:
\[
	p\langle \pi_{0}, q(f \times 1_{C})\rangle 
	\colon A \times C \longrightarrow A	
\]
\end{example}

\begin{example}
Every c-lens (also known as a split opfibration, see \cite{JRW12}) 
consisting of a \textsf{Get} functor $f \colon A \rightarrow B$ and 
\textsf{Put} functor $p \colon \comma{f}{B} \rightarrow A$ induces a 
lens in $\Lens(\Cat)$. 

Let $\mathbb{B}$ be the double category of squares, whose category of 
objects is $B$ and whose category of morphisms is the arrow category 
$\Arr{B}$, together with domain and codomain functors 
$l, r \colon \Arr{B} \rightarrow B$ given by,
\begin{equation*}
\begin{tikzcd}
B_{1}
\arrow[d, "d_{1}"']
& B_{11}
\arrow[l, "d_{2}\pi_{0}"']
\arrow[r, "d_{0}\pi_{1}"]
\arrow[d, "d_{2}\pi_{1}"]
& B_{1}
\arrow[d, "d_{1}"]
\\
B_{0}
& B_{1}
\arrow[l, "d_{1}"]
\arrow[r, "d_{0}"']
& B_{0}
\end{tikzcd}
\qquad \qquad
\begin{tikzcd}
B_{1}
\arrow[d, "d_{0}"']
& B_{11}
\arrow[l, "d_{2}\pi_{0}"']
\arrow[r, "d_{0}\pi_{1}"]
\arrow[d, "d_{0}\pi_{0}"]
& B_{1}
\arrow[d, "d_{0}"]
\\
B_{0}
& B_{1}
\arrow[l, "d_{1}"]
\arrow[r, "d_{0}"']
& B_{0}
\end{tikzcd}
\end{equation*}
using the same notation from the diagram in Example~\ref{ex:arrowcat};
define $\mathbb{A}$ similarly. 
Construct the functor $\Arr{f} \colon \Arr{A} \rightarrow \Arr{B}$ 
between the arrow categories,
\begin{equation*}
\begin{tikzcd}
A_{1}
\arrow[d, "f_{1}"']
& A_{11}
\arrow[l, "d_{2}\pi_{0}"']
\arrow[r, "d_{0}\pi_{1}"]
\arrow[d, "f_{2} \times f_{2}"]
& A_{1}
\arrow[d, "f_{1}"]
\\
B_{1}
& B_{11}
\arrow[l, "d_{2}\pi_{0}"]
\arrow[r, "d_{0}\pi_{1}"']
& B_{1}
\end{tikzcd}
\end{equation*}
induced by the \textsf{Get} functor, 
which forms a canonical double functor 
$f \colon \mathbb{A} \rightarrow \mathbb{B}$.
 
Let $\mathbb{\Lambda}$ be the double category with domain and codomain 
functors described by the span: 
\begin{equation*}
\begin{tikzcd}[row sep = small, column sep = tiny]
& \comma{f}{B}
\arrow[ld, "l"']
\arrow[rd, "p"]
& \\
A
& & A
\end{tikzcd}
\end{equation*}
Note that the comma category $\comma{f}{B}$ may defined as the pullback,
\begin{equation*}
\begin{tikzcd}
\comma{f}{B}
\arrow[d, "l"']
\arrow[r]
\arrow[rr, bend left, "r"]
& \Arr{B} 
\arrow[d, "l"]
\arrow[r, "r"'] 
& B \\
A
\arrow[r, "f"']
& B
\arrow[from=lu, phantom, "\lrcorner" very near start]
&
\end{tikzcd}
\end{equation*}
where $l \colon \comma{f}{B} \rightarrow A$ and 
$r \colon \comma{f}{B} \rightarrow B$ are the usual comma category 
projections.
The double category $\mathbb{\Lambda}$ is well-defined by the c-lens 
laws, and we may show with further reasoning that there exist unique 
double functors 
$\varphi \colon \mathbb{\Lambda} \rightarrow \mathbb{A}$ and 
$\phibar \colon \mathbb{\Lambda} \rightarrow \mathbb{B}$. 
\end{example}

%%%%%%%%%%%%%%%%%%%%%%%%%%%%%%%%%%%%%%%%%%%%%%%%%%%%%%%%%%%%%%%%%%%%%%%%
\section{Conclusion and Future Work}\label{S:conclusion}
%%%%%%%%%%%%%%%%%%%%%%%%%%%%%%%%%%%%%%%%%%%%%%%%%%%%%%%%%%%%%%%%%%%%%%%%

In this paper it was shown that lenses may be defined internal to any 
category $\E$ with pullbacks, providing a significantly generalised yet
minimal framework to understand the notion of synchronisation between
systems. 
It was demonstrated that the enigmatic \textsf{Put} of a lens may be 
understood as a cofunctor, which has a simple description as a span of a
discrete opfibration and an identity-on-objects functor. 
The surprising characterisation of a lens as a functor/cofunctor pair 
both promotes the prevailing attitude of lenses as morphisms between 
categories, and yields a straightforward definition for composition in 
the category $\Lens(\E)$, which fits within a diagram of 
forgetful functors. 
\begin{equation*}
\begin{tikzcd}[row sep = tiny]
& & \Cof(\E)^{\textsf{op}}
\arrow[rd] & \\
\DOpf(\E) 
\arrow[r]
& \Lens(\E)
\arrow[ru]
\arrow[rd]
& & \E \\
& & \Kat(\E) 
\arrow[ru]
& 
\end{tikzcd}
\end{equation*}

The success of internal lenses in unifying the known examples of 
state-based lenses, c-lenses, and d-lenses promotes the effectiveness of
this perspective for use in applications such programming, databases,
and Model-Driven Engineering, and also anticipates many future
mathematical developments. 
Current work in progress indicates that $\Lens(\E)$ may be enhanced to a 
$2$-category through incorporating natural transformations between 
lenses, while consideration of spans in $\Lens(\E)$ leads towards a 
clarified understanding of symmetric lenses; both ideas which have been 
shown to be important in applications and the literature 
\cite{Dis17, HPW11}.  
In future work we will investigate examples of lenses internal to a
diverse range of categories, as well as taking steps towards a theory of
lenses between enriched categories.

\subsection*{Acknowledgements}
The author is grateful to Michael Johnson and the anonymous reviewers
for providing helpful feedback on this work. 
The author would also like to thank the organisers of the ACT2019 
conference. 

%%%%%%%%%%%%%%%%%%%%%%%%%%%%%%%%%%%%%%%%%%%%%%%%%%%%%%%%%%%%%%%%%%%%%%%%
% References 
%%%%%%%%%%%%%%%%%%%%%%%%%%%%%%%%%%%%%%%%%%%%%%%%%%%%%%%%%%%%%%%%%%%%%%%%

\bibliographystyle{eptcs}

\end{document}